\documentclass{amsart}%
\usepackage{amsfonts}
\usepackage{amsmath}
\usepackage{amssymb}
\usepackage{graphicx}%
\providecommand{\U}[1]{\protect\rule{.1in}{.1in}}
\newtheorem{theorem}{Theorem}
\theoremstyle{plain}

\newtheorem{corollary}{Corollary}

\newtheorem{definition}{Definition}
\newtheorem{example}{Example}

\newtheorem{lemma}{Lemma}
\newtheorem{notation}{Notation}

\newtheorem{proposition}{Proposition}
\newtheorem{remark}{Remark}

\numberwithin{equation}{section}
\begin{document}
\title{Ricci curvature and metric in causal spacetimes.}
\author{Javier Lafuente L\'{o}pez}
\address{Dpto. de \'{A}lgebra, Geometr\'{\i}a y Topolog\'{\i}a.\\
Universidad Complutense de Madrid. \\
Avda. Complutense s/n Madrid 28040 Spain}
\email{jlafuent@ucm.es}
\urladdr{https://blogs.mat.ucm.es/jlafuent/}
\maketitle

\begin{abstract}
A viable spacetime is one that admits a complete timelike geodesic. It is
shown that a causal diffeomorphism preserving the Ricci tensor between two
spacetimes is necessarily a homothety, if one of them is viable.

\end{abstract}

\section{Introduction.}

A metric $g\in\mathcal{C}$ of a causal spacetime $\left(  M,\mathcal{C}%
\right)  $ determines an energy tensor
\begin{equation}
T=\frac{1}{4\pi G}\left(  Ric-\frac{1}{2}Scg\right)  \label{Einst}%
\end{equation}
where $Ric=Ric_{g}$ is the Ricci tensor and $Sc=Sc_{g}$ is the scalar
curvature tensor of $g$. \ We will then say that $T$ is in the causal
structure $\mathcal{C}$. It is well known that in general an (energy) tensor
$T$ does not automatically determine the existence of the metric $g$ that
satisfies equation (\ref{Einst}), and in the event that it exists, uniqueness
is not guaranteed unless certain initial conditions are imposed. We now ask
whether the field equations (\ref{Einst}) given by the energy tensor $T$ in
the causal structure $\mathcal{C}$, have a unique solution (up to homotheties)
within their conformal class. The main result of this article is to give an
affirmative answer to this question, when the energy tensor comes from a
viable spacetime, that is, with an inertial observer of infinite life
according to their proper time. 

There is a theorem by Hawking and Penrose which ensures that if certain
physically reasonable conditions are imposed on the energy tensor $T$, the
metrics $g$ that are solutions of (\ref{Einst}) necessarily contain an
inertial observer of finite life (\cite{Pen}\cite{Hawk1}\cite{Hawk2}). It
would be wonderful to be able to ensure that there is also another with
infinite life capable of bypassing all possible singularities. For example,
this happens in the Schwarzschild spacetime, which is therefore viable. Our
result therefore ensures that the Schwarzschild metric is the only solution
(up to homothety) of equation (\ref{Einst}) with $T=0$, within its causal
structure. 

From another point of view, we have also proved that a causal diffeomorphism
preserving the Ricci tensor between two spacetimes is necessarily a homothety,
if we assume that one of them is viable. Note that the preservation of the
Ricci tensor by a conformal diffeomorphism between two Ricci-flat spaces is
automatic, and in particular a conformal diffeomorphism in Schwarzschild space
is necessarily a homothety. There is abundant bibliography regarding all this,
in the context of general semi-Riemannian manifolds. Kulkarni in
\cite{Kul1}\cite{Kul} raises the problem of determining conditions for the
curvature to determine the metric in the Riemannian case. In \cite{Turk} De
Turk investigates whether the Ricci tensor uniquely determines (except
homothety) the Riemannian structure, and proves that it is true for certain
Einstein metrics . On the other hand, Xingwang \cite{Xing}, analyzes this
uniqueness within a conformal class, and proves that, for connected, compact
and oriented Riemannian manifold of dimension $\geq2$. Finally K\"{u}hnel
\cite{Kun} \ proves this same theorem when $(M,g)$ is geodesically complete.
We have also proved this same result here using a different method (see Remark
\ref{Rade} on page \pageref{Rade}).

A semi-Riemannian manifold $\left(  M,g\right)  $ has been fixed. This means
that, $M$ is a real connected differentiable manifold with finite dimension
$m>2$ and $g$ is a metric. When $g$ is a Lorentz metric we will say that
$\left(  M,g\right)  $ is a spacetime. Normally the signature is
$(-,+,...,+)$, but we will make no distinction between a metric $g$ and
another homothetic one $cg$ with $c\in\mathbb{R}\backslash\left\{  0\right\}
$, since both define the same geometry (and in particular the same connection)
with different units of measurement. Thus we will avoid the tag 'up to
homotheties' when we claim uniqueness.

\subsection{General assumptions.}

We denote by $\nabla$ $:\left(  X,Y\right)  \mapsto\nabla_{X}Y$ the
Levi-Civita connection of $g$ , by $\langle X,Y\rangle$ to $g\left(
X,Y\right)  $, and \textrm{grad} is its gradient operator. All connections
will be assumed symmetric.

We adopt the following definition of the curvature tensor $R$:
\[
R(X,Y)Z=\nabla_{X}\nabla_{Y}Z-\nabla_{Y}\nabla_{X}Z+\nabla_{\lbrack
X,Y]}Z\quad\text{for }X,Y,Z\in\mathfrak{X}(M)
\]
and the Ricci tensor is defined by:
\[
\mathrm{Ric}(X,Y)=\mathrm{Trace}(\mathfrak{X}(M)\ni V\longrightarrow
R(X,V)Y)\quad\text{for }X,Y\in\mathfrak{X}(M)
\]
If $\left(  X,Y\right)  \rightarrow T\left(  X,Y\right)  $ is a covariant
tensor we denote by $X\mapsto\widetilde{T}\left(  X\right)  $ the unique
$\left(  1,1\right)  $ tensor such that
\[
\left\langle \widetilde{T}\left(  X\right)  ,Y\right\rangle =T\left(
X,Y\right)
\]

\section{Conformal Structure.}

The metrics $g$, $\overline{g}$ of $M$ are said to be conformally equivalent
if there exists a differentiable function $\sigma:M\longrightarrow\mathbb{R}$,
with $\overline{g}=e^{2\sigma}g$. This is an equivalence relation, and we will
denote by $\mathcal{C}=[g]$ the equivalence class defined by the metric $g$.
$\mathcal{C}$ is called a conformal structure on $M$.

Note that two Lorentzian metrics on $M$ define the same causal structure if
and only if they define the same light cones in each tangent space or
equivalently define the same causality relation on $M$. For this reason we
also call $\left(  M,\mathcal{C}\right)  $ a causal space and we will assume
it is time-oriented.

\begin{definition}
\label{viable} The causal space $\left(  M,\mathcal{C}\right)  $ is said to be
viable, if there exists a metric $g\in\mathcal{C}$, such that $\left(
M,g\right)  $ is viable, in the sense that it has a timelike geodesic
$\gamma=\gamma\left(  t\right)  $ defined $\forall t>0$.
\end{definition}

\subsection{Conformal connections.}

A connection $\overline{\nabla}$ on $\left(  M,\mathcal{C}\right)  $, is said
to be conformal if it preserves the conformal structure under parallel
transport. That is, if parallel transport induces conformal (linear)
transformations between the tangent spaces. This happens in the Lorentz case,
if and only if $\overline{\nabla}$ preserves the light cones under parallel
transport, and we have the following result:

\begin{theorem}
[\cite{Kul1}]\label{0.3.1}The necessary and sufficient condition for a
connection $\overline{\nabla}$ on $M$ to be conformal is that there exists a
vector field $A\in\mathfrak{X}(M)$ such that:
\begin{equation}
\overline{\nabla}_{X}Y-\nabla_{X}Y=\left\langle A,X\right\rangle
Y+\left\langle A,Y\right\rangle X-\left\langle X,Y\right\rangle A\quad
\text{for all }X,Y\in\mathfrak{X}(M) \label{concon}%
\end{equation}
In particular, $A=(\mathrm{grad})_{g}\sigma$, if and only if $\nabla$ and
$\overline{\nabla}$ are the Levi-Civita connections associated with $g$ and
$\overline{g}=e^{2\sigma}g$ respectively.
\end{theorem}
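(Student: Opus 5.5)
To prove Theorem \ref{0.3.1}, I would set $D(X,Y)=\overline{\nabla}_{X}Y-\nabla_{X}Y$. Since both connections are symmetric, $D$ is a symmetric $(1,2)$-tensor field. Parallel transport of $\overline{\nabla}$ is conformal exactly when there is a $1$-form $\omega$ with $\overline{\nabla}g=2\omega\otimes g$. One direction is clear: if this holds, then along any curve the function $g(\overline{P}X,\overline{P}Y)$ of parallel fields satisfies a linear ODE $f'=2\omega(\dot c)f$, so it is rescaled by a positive factor. Conversely, conformality of every transport map forces $\overline{\nabla}_{X}g$ to be proportional to $g$ pointwise. This gives a $1$-form $\omega$, and since $\overline{\nabla}g=\nabla g-(\text{terms in }D)=-(\text{terms in }D)$ is smooth, so is $\omega$. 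In the Lorentz case the preservation of light cones gives the same conclusion, because a linear map preserving the null cone is conformal (for $m>2$).

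So the key step is to solve
\[
(\overline{\nabla}_{X}g)(Y,Z)=-\langle D(X,Y),Z\rangle-\langle Y,D(X,Z)\rangle=2\omega(X)\langle Y,Z\rangle
\]
for a symmetric $D$. Write $\omega=-\langle A,\cdot\rangle$ and $D_{0}(X,Y)=D(X,Y)-\bigl(\langle A,X\rangle Y+\langle A,Y\rangle X-\langle X,Y\rangle A\bigr)$. A direct check shows that the model tensor on the right already satisfies the equation with this $\omega$. Hence $D_{0}$ is symmetric and satisfies the homogeneous equation $\langle D_{0}(X,Y),Z\rangle+\langle Y,D_{0}(X,Z)\rangle=0$.

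Now apply the usual Koszul-type cyclic argument. Put $C(X,Y,Z)=\langle D_{0}(X,Y),Z\rangle$; it is symmetric in its first two slots and antisymmetric in its last two. Then
\[
C(X,Y,Z)=-C(X,Z,Y)=-C(Z,X,Y)=C(Z,Y,X)=C(Y,Z,X)=-C(Y,X,Z)=-C(X,Y,Z),
\]
so $C=0$. This gives (\ref{concon}). For sufficiency, read the computation backwards: (\ref{concon}) gives $\overline{\nabla}g=-2\langle A,\cdot\rangle\otimes g$, so transports are conformal. The only subtle point is the step from ``conformal transport'' to $\overline{\nabla}g=2\omega\otimes g$: differentiate the identity $\overline{P}_{t}^{*}g=\lambda(t)g$ at $t=0$.

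For the last assertion, use the standard formula for the Levi-Civita connection of $\overline{g}=e^{2\sigma}g$. Computing with Koszul's formula gives $\overline{\nabla}_{X}Y=\nabla_{X}Y+X(\sigma)Y+Y(\sigma)X-\langle X,Y\rangle\mathrm{grad}\,\sigma$, which is (\ref{concon}) with $A=\mathrm{grad}\,\sigma$. Conversely, suppose $A=\mathrm{grad}\,\sigma$ and $\overline{\nabla}$ is given by (\ref{concon}). By uniqueness of $A$ it then coincides with the Levi-Civita connection of $e^{2\sigma}g$. The vector $A$ is unique: if two vectors $A,A'$ gave the same $D$, then taking a trace gives $(m-2)\langle A-A',X\rangle\cdot$const$=0$, more precisely $\mathrm{tr}(Y\mapsto D(X,Y))=m\langle A,X\rangle$. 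The main obstacle in the whole argument is the cyclic symmetry step, and that part is routine.
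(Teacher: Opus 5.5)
The paper gives no proof of this theorem; it is quoted from Kulkarni \cite{Kul1}. So there is no in-paper argument to compare with, and your proposal is a correct, self-contained proof of it.

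Your route is the standard one for Weyl connections, and the computations check out:
\begin{itemize}
\item You turn conformal (or, in the Lorentz case, null-cone preserving) parallel transport into $\overline{\nabla}g=2\omega\otimes g$ by differentiating $\overline{P}_t^{*}g=\lambda(t)g$ at $t=0$.
\item You verify that the model tensor $\langle A,X\rangle Y+\langle A,Y\rangle X-\langle X,Y\rangle A$ realizes this equation with $\omega=-\langle A,\cdot\rangle$.
\item You show the remainder $D_0$ vanishes by the symmetric/skew cyclic argument.
\end{itemize}

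Two small corrections concern the last paragraph.
\begin{itemize}
\item You have the role of uniqueness of $A$ backwards. It is needed for the implication ``$\overline{\nabla}$ is the Levi-Civita connection of $e^{2\sigma}g$ $\Rightarrow$ $A=\mathrm{grad}\,\sigma$''. The converse follows directly from the conformal-change formula and needs no uniqueness.
\item Drop the garbled phrase involving $(m-2)$. The correct statement is the identity you give right after it, $\mathrm{tr}\left(Y\mapsto D(X,Y)\right)=m\langle A,X\rangle$.
\end{itemize}

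There is also a shorter way to get the last clause from what you already proved. Since $\overline{\nabla}g=-2\alpha\otimes g$, you have $\overline{\nabla}(e^{2\sigma}g)=2(d\sigma-\alpha)\otimes e^{2\sigma}g$. The torsion-free connection $\overline{\nabla}$ is therefore the Levi-Civita connection of $e^{2\sigma}g$ exactly when $\alpha=d\sigma$, i.e. $A=\mathrm{grad}\,\sigma$. This avoids both the Koszul computation and the separate uniqueness argument, and it also makes Remark \ref{0.3.2} immediate.
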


\begin{remark}
\label{0.3.2}The condition $A=\mathrm{grad}\sigma$ is equivalent to
$\alpha=d\sigma$, where $\alpha$ is the 1-form $g$-metrically equivalent to
$A$ (i.e., $\alpha:\mathfrak{X}(M)\ni X\longrightarrow\langle A,X\rangle
\in\mathbb{R}$). In particular the condition $d\alpha=0$, means that
$\overline{\nabla}$ is locally metric, that is: In a neighborhood $U$ of each
point of $M$, there exists a differentiable $\sigma:U\longrightarrow
\mathbb{R}$ such that $\overline{\nabla}$ is the Levi-Civita connection of
$e^{2\sigma}g$.
\end{remark}

\begin{corollary}
The null geodesic lines of two conformal connections necessarily coincide.
\end{corollary}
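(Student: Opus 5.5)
By Theorem \ref{0.3.1}, it suffices to compare an arbitrary conformal connection $\overline{\nabla}$ with the Levi-Civita connection $\nabla$ of a fixed metric $g\in\mathcal{C}$. If two conformal connections each have the same null geodesic lines as $\nabla$, they have the same null geodesic lines as each other. So let $\overline{\nabla}$ be given by (\ref{concon}) for some vector field $A$, and let $\gamma=\gamma(t)$ be a $\nabla$-geodesic with $\langle\dot\gamma,\dot\gamma\rangle=0$. Applying (\ref{concon}) with $X=Y=\dot\gamma$, the term $\langle X,Y\rangle A$ vanishes because $\dot\gamma$ is null, and we get
\[
\overline{\nabla}_{\dot\gamma}\dot\gamma=\nabla_{\dot\gamma}\dot\gamma+2\langle A,\dot\gamma\rangle\dot\gamma=2\langle A,\dot\gamma\rangle\dot\gamma .
\]
Hence $\gamma$ is a pregeodesic of $\overline{\nabla}$: its acceleration is proportional to its velocity.

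Next I reparametrize. Choose $s=s(t)$ with $s'>0$ solving $s''+2\langle A,\dot\gamma\rangle s'=0$, that is, $s'(t)=\exp\bigl(-2\int_{t_0}^{t}\langle A,\dot\gamma\rangle\,d\tau\bigr)$. Write $\beta(s)=\gamma(t)$ for the reparametrized curve. A routine chain-rule computation, using $\dot\gamma=s'\beta'$, shows that $\beta$ satisfies $\overline{\nabla}_{\beta'}\beta'=0$. So $\beta$ is a genuine $\overline{\nabla}$-geodesic with the same image as $\gamma$, and it is still null since reparametrization does not change the causal character. Thus every null geodesic line of $\nabla$ is a null geodesic line of $\overline{\nabla}$.

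The roles of the two connections are symmetric: $\nabla=\overline{\nabla}-(\text{same expression in }A)$, and the tensor $(X,Y)\mapsto\langle A,X\rangle Y+\langle A,Y\rangle X-\langle X,Y\rangle A$ kills nothing needed. Alternatively, the converse inclusion follows from uniqueness of geodesics. Through each point $p$ and each null direction there is a unique maximal null geodesic of each connection, and the $\nabla$-one has just been shown to be a $\overline{\nabla}$-geodesic. By uniqueness of solutions of the geodesic ODE for $\overline{\nabla}$, the two lines coincide as maximal lines.

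The only delicate point is this last step: one must check that the maximal intervals correspond. The reparametrization $s(t)$ is a diffeomorphism onto its image interval, so maximality of $\gamma$ transfers to maximality of $\beta$ as an unparametrized line. This is where I would be careful, although it is standard.
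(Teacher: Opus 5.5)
Your route is the one the paper intends. The paper writes no separate proof: for null $X$, (\ref{concon}) reduces to $\overline{\nabla}_XX-\nabla_XX=2\langle A,X\rangle X$, so null geodesics of each connection are pregeodesics of the other. Two details in your write-up need fixing, though.

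First, the sign of your reparametrization is reversed. With $\dot\gamma=s'\beta'$ one has $\overline{\nabla}_{\dot\gamma}\dot\gamma=s''\beta'+(s')^{2}\,\overline{\nabla}_{\beta'}\beta'$, and the left side equals $2\langle A,\dot\gamma\rangle s'\beta'$. Hence $\beta$ is a $\overline{\nabla}$-geodesic exactly when $s''=2\langle A,\dot\gamma\rangle s'$, i.e.\ $s'(t)=\exp\bigl(+2\int_{t_0}^{t}\langle A,\dot\gamma\rangle\,d\tau\bigr)$. With your choice you get instead $\overline{\nabla}_{\beta'}\beta'=4\langle A,\dot\gamma\rangle\beta'/s'$, which is not zero in general.

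Second, the fact that $s$ is a diffeomorphism onto its image does not by itself give maximality of $\beta$. The image $s((a,b))$ may be a bounded interval, and whether $\beta$ continues as a $\overline{\nabla}$-geodesic past its endpoints is exactly what must be ruled out. What settles it is the reverse inclusion, which you should state precisely instead of ``kills nothing needed''.

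Let $\beta$ be a $\overline{\nabla}$-geodesic with $\beta'(0)$ null. By (\ref{concon}) and the metricity of $\nabla$,
\[
\frac{d}{ds}\langle\beta',\beta'\rangle=-2\langle A,\beta'\rangle\langle\beta',\beta'\rangle ,
\]
so $\beta'$ stays null. You cannot borrow this from the $\nabla$ case, since $\overline{\nabla}$ need not be $g$-metric. Then $\nabla_{\beta'}\beta'=-2\langle A,\beta'\rangle\beta'$, so $\beta$ is a $\nabla$-pregeodesic.

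Consequently any $\overline{\nabla}$-extension of $\beta$ would reparametrize to a $\nabla$-geodesic extending $\gamma$, because two $\nabla$-geodesics tracing the same arc differ by an affine change of parameter. This contradicts the maximality of $\gamma$. With these two corrections the proof is complete.
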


\subsection{Comparison of Ricci tensors.}

\begin{notation}
\label{notacion}In what follows, the symbols $\nabla$, $\overline{\nabla}$ and
$A$ have the same meaning as in theorem \ref{0.3.1}. The reference metric is
$g$ with Ricci tensor $\mathrm{Ric}$, while $\overline{\mathrm{Ric}}$ will be
the corresponding tensor of $\overline{\nabla}$. If $\alpha$ is the unique
1-form such that $\widetilde{\alpha}=A$ (that is $\alpha\left(  X\right)
=\left\langle A,X\right\rangle $), we define $Q=\nabla\alpha-\alpha
\otimes\alpha$ therefore
\begin{equation}
\tilde{Q}(X)=\nabla_{X}A-\langle A,X\rangle A \label{Qtilde}%
\end{equation}
and we then have the following result (see \cite{Kul1}):
\end{notation}

\begin{theorem}
\label{0.5.1}Assuming $\overline{\mathrm{Ric}}$ is symmetric, the difference
tensor $\mathrm{E}=\overline{\mathrm{Ric}}-\mathrm{Ric}$ satisfies the
equation%
\begin{equation}
-\frac{1}{m-2}\mathrm{E}+\frac{1}{2\left(  m-2\right)  \left(  m-1\right)
}tr\left(  \widetilde{\mathrm{E}}\right)  g=Q+\frac{1}{2}\left\langle
A,A\right\rangle g \label{mainE}%
\end{equation}

\end{theorem}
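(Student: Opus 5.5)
The plan is a direct computation of $\overline{R}-R$ from the difference tensor in (\ref{concon}), followed by a trace argument that inverts the resulting linear relation between $\mathrm{E}$ and $Q$. Write $\overline{\nabla}_XY=\nabla_XY+S(X,Y)$ with
\[
S(X,Y)=\alpha(X)Y+\alpha(Y)X-\langle X,Y\rangle A .
\]
Since $S$ is symmetric, the standard formula gives
\[
\overline{R}(X,Y)Z-R(X,Y)Z=(\nabla_XS)(Y,Z)-(\nabla_YS)(X,Z)+S(X,S(Y,Z))-S(Y,S(X,Z)).
\]
The sign should be adjusted to the paper's convention for $R$, but the final identity is insensitive to this as long as we are consistent.

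Next I would compute the two kinds of terms. The derivative terms are $(\nabla_XS)(Y,Z)=(\nabla_X\alpha)(Y)Z+(\nabla_X\alpha)(Z)Y-\langle Y,Z\rangle\nabla_XA$. The quadratic terms are expanded using the explicit form of $S$, and most of them cancel under antisymmetrization in $X,Y$. After collecting, the aim is to reach the familiar form
\[
\overline{R}(X,Y)Z-R(X,Y)Z=\pm\big(Q(X,Z)Y-Q(Y,Z)X+\langle X,Z\rangle\tilde{Q}(Y)-\langle Y,Z\rangle\tilde{Q}(X)\big)\pm\langle A,A\rangle\big(\langle X,Z\rangle Y-\langle Y,Z\rangle X\big)+(\text{terms involving }d\alpha).
\]
Here $Q=\nabla\alpha-\alpha\otimes\alpha$ as in Notation \ref{notacion}. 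The $d\alpha$ terms come from the skew part of $\nabla\alpha$. This bookkeeping, making the $\alpha\otimes\alpha$ and $\langle A,A\rangle$ pieces combine exactly into $Q+\frac12\langle A,A\rangle g$, is the main computational obstacle. To control it I would work at a point in an orthonormal frame with $\nabla e_i=0$.

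Then I would take the Ricci trace over $V$ as in the definition of $\mathrm{Ric}$. This gives
\[
\mathrm{E}=\overline{\mathrm{Ric}}-\mathrm{Ric}=-(m-2)Q-\big(\mathrm{tr}\,\tilde Q\big)g-(m-2)\langle A,A\rangle g+(\text{skew part}).
\]
The antisymmetric part of $\overline{\mathrm{Ric}}$ is proportional to $d\alpha$. The hypothesis that $\overline{\mathrm{Ric}}$ is symmetric is used exactly here: it forces this part to vanish, hence $d\alpha=0$ and $Q$ is symmetric. This makes the equation an identity between symmetric tensors.

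Finally, I would invert the linear relation. Taking the $g$-trace gives
\[
\mathrm{tr}\,\tilde{\mathrm{E}}=-2(m-1)\,\mathrm{tr}\,\tilde Q-m(m-2)\langle A,A\rangle .
\]
I would solve this for $\mathrm{tr}\,\tilde Q$, substitute back, and divide by $-(m-2)$, which is allowed since $m>2$. The $\langle A,A\rangle$ coefficients should then combine into $\frac12\langle A,A\rangle g$, giving (\ref{mainE}). I would use this step as a check on the exact coefficients obtained in the curvature computation.
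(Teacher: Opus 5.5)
Your route is the same as the paper's: write $\mathrm{E}$ as a linear combination of $Q$, $q=\mathrm{tr}\,\tilde Q$ and $\langle A,A\rangle g$, take the trace, and invert. The paper only asserts the formula for $\mathrm{E}$, so your derivation of $\overline{R}-R$ is a useful addition. The bracketed curvature formula you aim for is correct: with the standard $R(X,Y)Z=\nabla_X\nabla_YZ-\nabla_Y\nabla_XZ-\nabla_{[X,Y]}Z$, all the signs are $+$ and the extra term is $d\alpha(X,Y)Z$. Your observation that the skew part of $\overline{\mathrm{Ric}}$ is a nonzero multiple of $d\alpha$ correctly identifies where the symmetry hypothesis is used; the paper uses it without saying so.

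There is, however, a concrete error that makes your last step fail as written. When you trace your own curvature formula, the four $Q$-terms give $Q-mQ+Q-q\,g=(2-m)Q-q\,g$. But the term $\langle A,A\rangle(\langle X,Z\rangle Y-\langle Y,Z\rangle X)$ gives $(1-m)\langle A,A\rangle g$, not $(2-m)\langle A,A\rangle g$. The correct relations are
\[
\mathrm{E}=(2-m)Q-\bigl(q+(m-1)\langle A,A\rangle\bigr)g,\qquad
\mathrm{tr}\,\tilde{\mathrm{E}}=-2(m-1)q-m(m-1)\langle A,A\rangle ,
\]
which is the paper's formula. With your coefficients $-(m-2)$ and $-m(m-2)$, the inversion yields
\[
-\frac{1}{m-2}\mathrm{E}+\frac{\mathrm{tr}\,\tilde{\mathrm{E}}}{2(m-2)(m-1)}\,g=Q+\frac{m-2}{2(m-1)}\langle A,A\rangle g ,
\]
so the $\langle A,A\rangle$ coefficients do not combine into $\tfrac12$ for any $m>2$. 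Only with $(m-1)$ do you get $q+(m-1)\langle A,A\rangle=-\frac{\mathrm{tr}\,\tilde{\mathrm{E}}}{2(m-1)}+\frac{m-2}{2}\langle A,A\rangle$, and hence exactly (\ref{mainE}).

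Your remark that the final identity is insensitive to the sign convention is also wrong. Reversing the sign of $\mathrm{Ric}$ replaces $\mathrm{E}$ by $-\mathrm{E}$ but leaves $Q+\tfrac12\langle A,A\rangle g$ unchanged. The identity holds for the normalization in which round spheres have positive Ricci curvature, for instance $\mathrm{Ric}(Y,Z)=\mathrm{tr}(X\mapsto R(X,Y)Z)$ with the standard $R$ above. You need to fix that normalization explicitly, rather than rely on the paper's printed formula for $R$, which is not tensorial as written.
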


\begin{proof}
The difference tensor
\[
E\left(  X,Z\right)  =\overline{Ric}\left(  X,Z\right)  -Ric\left(
X,Z\right)  =tr\left\{  Y\rightarrow D\left(  X,Y\right)  Z\right\}
\]
is given by the formula%
\[
E\left(  X,Z\right)  =\left(  2-m\right)  Q\left(  X,Z\right)  -\left\{
q+\left(  m-1\right)  \left\langle A,A\right\rangle \right\}  \left\langle
X,Z\right\rangle
\]
where $q=tr\left(  \widetilde{Q}\right)  $ that is to say%
\begin{equation}
E=\left(  2-m\right)  Q-\left\{  \left(  q+\left(  m-1\right)  \right)
\left\langle A,A\right\rangle \right\}  g \label{E}%
\end{equation}

Using (\ref{E}) , the tensor $\widetilde{E}\left(  X\right)  =\widetilde
{\overline{Ric}}\left(  X\right)  -\widetilde{Ric}\left(  X\right)  $ has the
following expression:%
\[
\widetilde{E}\left(  X\right)  =\left(  2-m\right)  \widetilde{Q}\left(
X\right)  -\left\{  q+\left(  m-1\right)  \left\langle A,A\right\rangle
\right\}  X
\]
contracting $\widetilde{E}$ yields
\begin{align}
tr\left(  \widetilde{E}\right)   &  =\left(  2-m\right)  q-\left\{  q+\left(
m-1\right)  \left\langle A,A\right\rangle \right\}  m\label{trE}\\
&  =\left(  2-2m\right)  q-\left(  m-1\right)  m\left\langle A,A\right\rangle
\nonumber
\end{align}
solving for $q$ in (\ref{trE})%
\begin{equation}
q=\frac{1}{2-2m}tr\left(  \widetilde{E}\right)  -\frac{m}{2}\left\langle
A,A\right\rangle \label{q}%
\end{equation}
and solving for $Q$ in (\ref{E})
\begin{equation}
Q=\frac{1}{2-m}E+\frac{1}{2-m}\left\{  q+\left(  m-1\right)  \left\langle
A,A\right\rangle \right\}  g \label{Q}%
\end{equation}
finally substituting (\ref{q}) into (\ref{Q}) yields (\ref{mainE})
\end{proof}

With notation \ref{notacion}, we have the following result

\begin{proposition}
\label{0.5.3} The condition $\overline{\mathrm{Ric}}=\mathrm{Ric}$ is
equivalent to saying that the field $A$ satisfies%
\begin{equation}
\nabla_{X}A=\langle A,X\rangle A-\frac{1}{2}\langle A,A\rangle X,\quad\forall
X\in\mathfrak{X}(M)\label{ATP}%
\end{equation}

\end{proposition}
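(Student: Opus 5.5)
The plan is to specialize Theorem \ref{0.5.1} to the case $\mathrm{E}=0$ and read off (\ref{ATP}) from (\ref{mainE}), then to reverse the argument for the converse. The hypothesis of Theorem \ref{0.5.1} is that $\overline{\mathrm{Ric}}$ is symmetric. In the forward direction this is automatic, because $\overline{\mathrm{Ric}}=\mathrm{Ric}$ and $\mathrm{Ric}$ is symmetric. So if $\overline{\mathrm{Ric}}=\mathrm{Ric}$, then $\mathrm{E}=0$ and $tr(\widetilde{\mathrm{E}})=0$, and (\ref{mainE}) collapses to
\[
Q+\tfrac12\langle A,A\rangle g=0 .
\]
Raising an index with $g$ and using (\ref{Qtilde}), this says $\nabla_XA-\langle A,X\rangle A+\frac12\langle A,A\rangle X=0$ for every $X$, which is exactly (\ref{ATP}).

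For the converse, assume (\ref{ATP}) holds. By (\ref{Qtilde}) it is equivalent to $\widetilde{Q}(X)=-\frac12\langle A,A\rangle X$, that is, $Q=-\frac12\langle A,A\rangle g$.

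The first step is to get the symmetry of $\overline{\mathrm{Ric}}$ so that the computation of Theorem \ref{0.5.1} applies. Since $Q$ is symmetric and $\alpha\otimes\alpha$ is symmetric, $\nabla\alpha$ is symmetric. Hence $d\alpha=0$, and by Remark \ref{0.3.2} the connection $\overline{\nabla}$ is locally the Levi-Civita connection of a metric $e^{2\sigma}g$. The Ricci tensor of a Levi-Civita connection is symmetric, so $\overline{\mathrm{Ric}}$ is symmetric.

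With symmetry in hand, formula (\ref{E}) from the proof of Theorem \ref{0.5.1} applies. I will use it in its correct grouping,
\[
\mathrm{E}=(2-m)Q-\{q+(m-1)\langle A,A\rangle\}g ,
\]
which is the form in which (\ref{trE}) was derived. Substituting $Q=-\frac12\langle A,A\rangle g$ gives $q=tr(\widetilde{Q})=-\frac m2\langle A,A\rangle$. Then
\[
\mathrm{E}=\Big(\tfrac{m-2}{2}+\tfrac m2-(m-1)\Big)\langle A,A\rangle g=0 ,
\]
so $\overline{\mathrm{Ric}}=\mathrm{Ric}$.

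The main obstacle is this converse direction. Equation (\ref{mainE}) was derived by inverting (\ref{E}) under the symmetry assumption, so one must first secure that symmetry, which is why the closedness of $\alpha$ is needed. One must then go back to (\ref{E}) itself rather than to (\ref{mainE}), since (\ref{mainE}) alone does not immediately force $\mathrm{E}=0$. The small misprint in the bracket of (\ref{E}) should be read as $q+(m-1)\langle A,A\rangle$, consistent with the first displayed formula for $E$ in that proof and with (\ref{trE}).
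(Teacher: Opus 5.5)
Your proof is correct and follows the paper's route. The forward direction is identical. For the converse, the paper plugs $Q+\frac{1}{2}\langle A,A\rangle g=0$ into (\ref{mainE}) and takes the trace to get $\mathrm{Trace}(\widetilde{\mathrm{E}})=0$ (using $m\geq 3$), hence $\mathrm{E}=0$, so (\ref{mainE}) alone does suffice. Your direct substitution into (\ref{E}) is equivalent and does not even need $m\geq 3$. What you add is the check of the symmetry hypothesis of Theorem \ref{0.5.1} in the converse, via $d\alpha=0$ and Remark \ref{0.3.2}; the paper silently omits this check.
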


\begin{proof}
If $\mathrm{E}=\overline{\mathrm{Ric}}-\mathrm{Ric}=0$, by (\ref{mainE}) it
follows that $Q+\frac{1}{2}\langle A,A\rangle g=0$, that is
\[
\widetilde{Q}+\frac{1}{2}\langle A,A\rangle\widetilde{g}=0
\]
since $\widetilde{g}=id$, by (\ref{Qtilde}) we have
\[
\underset{\widetilde{Q}\left(  X\right)  }{\underbrace{\nabla_{X}A-\langle
A,X\rangle A}}+\frac{1}{2}\langle A,A\rangle X=0
\]
and thus $A$ satisfies (\ref{ATP}). Conversely, \ if $A$ satisfies (\ref{ATP})
then $\widetilde{Q}\left(  X\right)  =-\frac{1}{2}\langle A,A\rangle X$ and
therefore $Q+\frac{1}{2}\langle A,A\rangle g=0$. By (\ref{mainE}) it is seen
that%
\begin{equation}
-\frac{1}{m-2}\mathrm{E}+\frac{1}{2\left(  m-2\right)  \left(  m-1\right)
}tr\left(  \widetilde{\mathrm{E}}\right)  g=0\label{EtrE}%
\end{equation}
but then
\begin{equation}
\mathrm{E}=\frac{1}{2\left(  m-1\right)  }\mathrm{Trace}\left(  \widetilde
{\mathrm{E}}\right)  g\label{EEtilde}%
\end{equation}
and therefore since $\widetilde{g}=Id$
\begin{align*}
\widetilde{\mathrm{E}} &  =\frac{1}{2\left(  m-1\right)  }\mathrm{Trace}%
\left(  \widetilde{\mathrm{E}}\right)  Id\Rightarrow\\
\mathrm{Trace}\left(  \widetilde{\mathrm{E}}\right)   &  =\frac{m}{2\left(
m-1\right)  }\mathrm{Trace}\left(  \widetilde{\mathrm{E}}\right)  \\
&  \Rightarrow0=-\frac{m-2}{2\left(  m-1\right)  }\mathrm{Trace}\left(
\widetilde{\mathrm{E}}\right)
\end{align*}
If there exists $p$ such that $\left.  \mathrm{Trace}\left(  \widetilde
{\mathrm{E}}\right)  \right\vert _{p}\neq0$, then $m=2$, but $m\geq3$, by
hypothesis. By (\ref{EtrE}) it is $\mathrm{E}=0.$
\end{proof}

\subsection{Atypical fields.}

Motivated by proposition \ref{0.5.3} we establish the following definition

\begin{definition}
\label{0.5.2} A field $A\in\mathfrak{X}(M)$ will be called atypical, briefly
ATP, if it satisfies:
\begin{equation}
\nabla_{X}A=\langle A,X\rangle A-\frac{1}{2}\langle A,A\rangle X,\quad\forall
X\in\mathfrak{X}(M) \label{atp}%
\end{equation}
Note that the identically zero field $A$ is atypical. On the other hand, if
$\left(  X_{1},\ldots,X_{m}\right)  $ is a parallelization then $A$ is
atypical if and only if (\ref{atp}) is satisfied for each $X=X_{i}$.
\end{definition}

Now, in the context of Notation \ref{notacion} Theorem \ref{0.5.3} the
condition $\overline{\mathrm{Ric}}=\mathrm{Ric}$ is equivalent to the field
$A$ being atypical. Atypical fields are generally difficult to find, we show
an example:

\begin{example}
\label{Ex1}with $M=\left\{  \left(  x,y\right)  \in\mathbb{R}^{2}%
:y>x>0\right\}  \backslash\ $ and the Minkowski metric $g=-dx^{2}+dy^{2}$,
which in hyperbolic polar coordinates $\left(  \rho,\theta\right)  $
$x=\rho\sinh\theta,$ $y=\rho\cosh\theta$ is written $g=d\rho^{2}-\rho
^{2}d\theta^{2}$, we observe that there is a conformal metric
\[
\overline{g}=\frac{1}{\rho^{4}}g
\]
with $\mathrm{Ric}=\overline{\mathrm{Ric}}=0$, as a consequence of the
conformal character of hyperbolic inversions. Indeed

The inversion $\iota$ of the unit hyperboloid is written in polar coordinates
\[
\left(  \overline{\rho},\overline{\theta}\right)  \overset{\iota
}{\longleftarrow}\left(  \rho,\theta\right)  :\left\{
\begin{tabular}
[c]{l}%
$\rho=1/\overline{\rho}$\\
$\theta=\overline{\theta}$%
\end{tabular}
\ \ \ \ \ \right.  ,\left\{
\begin{tabular}
[c]{l}%
$d\rho^{2}=-\left(  1/\overline{\rho}^{4}\right)  d\overline{\rho}^{2}$\\
$d\theta^{2}=d\overline{\theta}^{2}$%
\end{tabular}
\ \ \ \ \ \right.
\]
and
\begin{align*}
\iota^{\ast}g  &  =\left(  1/\overline{\rho}^{4}\right)  d\overline{\rho}%
^{2}-\left(  1/\overline{\rho}^{2}\right)  d\overline{\theta}^{2}\\
&  =\frac{1}{\overline{\rho}^{4}}\left(  d\overline{\rho}^{2}-\overline{\rho
}^{2}d\overline{\theta}^{2}\right)
\end{align*}
so indeed $\iota$ is conformal. From another point of view, in reality we have
proved that $\frac{1}{\overline{\rho}^{4}}\left(  d\overline{\rho}%
^{2}+\overline{\rho}^{2}d\overline{\theta}^{2}\right)  $ is the expression of
the metric $g$ in the coordinates $\left(  \overline{\rho}=1/\rho
,\overline{\theta}=\theta\right)  $ so formally $\overline{g}$ has (like $g$)
zero Ricci tensor. But%
\[
\frac{1}{\rho^{4}}=e^{2\sigma}\text{, with }\sigma=-2\ln\rho
\]
therefore
\[
\alpha=d\sigma=\frac{-2}{\rho}d\rho\Rightarrow\widetilde{\alpha}=A=\frac
{-2}{\rho}\frac{\partial}{\partial\rho}%
\]
although the exposed theory is not consistent for dimension $m=2$, it can be
verified that $A$ is the atypical field and that $\operatorname{div}A=0$. If
we define $\phi\left(  x_{0},\ldots,x_{m}\right)  =-x_{0}^{2}+x_{1}^{2}%
+\cdots+x_{m}^{2}$ it can be proved that there is a diffeomorphism $\Phi$ of
\[
M=\left\{  x=\left(  x_{0},\ldots,x_{m}\right)  :\phi\left(  x\right)
<0\right\}
\]
in $\mathbb{R}^{+}\times\mathbb{H}=\left\{  \left(  \rho,\theta\right)
:\rho>0\text{, }\theta\in\mathbb{H}\right\}  $ where $\mathbb{H}$ is the
hyperboloid of equation
\[
\mathbb{H}:\phi\left(  x\right)  =-1
\]
which has the equations
\[
\Phi:\left\{
\begin{tabular}
[c]{l}%
$\rho=\sqrt{-\phi\left(  x\right)  }$\\
$\theta=x/\sqrt{-\phi\left(  x\right)  }$%
\end{tabular}
\right.
\]

If we endow $M$ with the Minkowski metric $\eta=-dx_{0}^{2}+dx_{1}^{2}%
+\cdots+dx_{m}^{2}$ and $\mathbb{H}$ with the Riemannian metric it can be
proved that%
\[
\Phi:\left(  M,\eta\right)  \rightarrow\left(  \mathbb{R}^{+}\times
\mathbb{H},h=d\rho^{2}-\rho^{2}d\theta^{2}\right)
\]
is an isometry. Note that $h$ is Lorentzian but with signature $\left(
+,-\cdots-\right)  $. Therefore $\left(  \mathbb{R}^{+}\times\mathbb{H}%
,h\right)  $ is Ricci-flat, and the map $\phi:\mathbb{R}^{+}\times
\mathbb{H\rightarrow R}^{+}\times\mathbb{H}$, $\left(  \rho,\theta\right)
\mapsto\left(  \overline{\rho}=1/\rho,\overline{\theta}=\theta\right)  $ is
conformal with factor $\frac{1}{\rho^{4}}$ that is%
\[
\phi^{\ast}h=\frac{1}{\rho^{4}}h
\]
and since $\frac{1}{\rho^{4}}=e^{2\sigma}$, with $\sigma=-2\ln\rho$,
$\alpha=d\sigma=\frac{-2}{\rho}d\rho$ it is concluded that the field%
\[
\widetilde{\alpha}=A=\frac{-2}{\rho}\frac{\partial}{\partial\rho}%
\]
is atypical.
\end{example}

On the other hand these atypical fields show certain global properties that
will have their consequences in the following section.

\begin{proposition}
\label{1.2}Let $A$ be an atypical field. If there exists $p\in M$ with
$\langle A,A\rangle(p)\neq0$, then it is $\langle A,A\rangle(x)\neq0$ for all
$x\in M$.
\end{proposition}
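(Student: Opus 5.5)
Set $f=\langle A,A\rangle$ and derive a first-order linear equation for $f$ along curves. By (\ref{atp}), for any $X$,
\[
X(f)=2\langle\nabla_{X}A,A\rangle=2\langle A,X\rangle\langle A,A\rangle-\langle A,A\rangle\langle X,A\rangle=\langle A,X\rangle f .
\]
Hence $df=f\,\alpha$, where $\alpha$ is the 1-form metrically equivalent to $A$. This identity is the whole content of the argument. From here on only the fact that $f$ satisfies a linear homogeneous ODE along every curve is needed.

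Let $Z=\{x\in M: f(x)=0\}$; it is closed by continuity. We show $Z$ is also open. Take $x_{0}\in Z$ and a connected coordinate neighbourhood $U$ of $x_{0}$. For any $y\in U$, join $x_{0}$ to $y$ by a smooth curve $c:[0,1]\to U$ and put $h(t)=f(c(t))$. Then
\[
h'(t)=\alpha(c'(t))\,h(t),\qquad h(0)=0,
\]
which is a linear ODE with continuous coefficient $a(t)=\alpha(c'(t))$. Its solutions are exactly $h(t)=h(0)\exp\int_{0}^{t}a(s)\,ds$, so uniqueness forces $h\equiv0$ and $f(y)=0$. Therefore $U\subset Z$, and $Z$ is open.

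Since $M$ is connected, $Z$ is either empty or all of $M$. If $f(p)\neq0$ for some $p$, then $Z\neq M$, so $Z=\emptyset$, which is the statement. Conversely, the same formula gives directly that $f(y)=f(p)\exp\int_{c}\alpha$ along any curve $c$ from $p$ to $y$. This expression is never zero and keeps the sign of $f(p)$, so it proves the statement in one stroke and even shows that the causal character of $A$ (timelike versus spacelike) is constant.

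There is no serious obstacle. The only points to check are the computation of $X(f)$, where the two terms $\langle A,X\rangle f$ combine with coefficient $2-1=1$, and that uniqueness for the linear scalar ODE along a piecewise smooth curve applies. Both are routine.
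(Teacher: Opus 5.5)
Your proof is correct and is essentially the paper's argument. The paper also computes $X\langle A,A\rangle=\langle A,X\rangle\langle A,A\rangle$ and then, along a curve from $p$ to $x$, applies an auxiliary lemma: a solution of $f'=Ff$ (with $F$ continuous) that is not identically zero never vanishes, which is exactly your formula $f(y)=f(p)\exp\int_{c}\alpha$. Your open--closed treatment of the zero set is only a cosmetic variant, and your justification via the explicit solution is slightly cleaner than the paper's separation-of-variables proof of that lemma.
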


\begin{proof}
Indeed, let $x\in M$, and $\gamma:(a,b)\longrightarrow M$ a differentiable
curve (which can be assumed an integral curve of a field $X$) with $a<0<1<b$
and $\gamma(0)=p$, $\gamma(1)=x$. Calling $f(t)=\langle A,A\rangle\circ
\gamma(t)$ and $F(t)=\langle A(\gamma(t)),\gamma^{\prime}(t)\rangle$ we
verify:
\begin{align*}
X(\langle A,A\rangle)(\gamma(t))  &  =f^{\prime}(t)=\langle A,X\rangle
(\gamma(t))\langle A,A\rangle(\gamma(t))\\
&  =F(t)f(t)\quad\forall t\in(a,b),\text{ and }f(0)\neq0
\end{align*}
therefore the hypotheses of the following lemma are satisfied, and it is
concluded then that $F(1)=\langle A,A\rangle(\gamma(1))=\langle A,A\rangle
(x)\neq0$.
\end{proof}

\begin{lemma}
Let $f:(a,b)\rightarrow\mathbb{R}$ be a differentiable function not
identically zero. Suppose there exists a continuous $F:(a,b)\rightarrow
\mathbb{R}$ such that $f^{\prime}(t)=F(t)f(t)$ $\forall t\in(a,b)$. Then $f$
is never zero (that is $f(t)\neq0$ $\forall t\in(a,b)$).
\end{lemma}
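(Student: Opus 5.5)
The idea is to compare $f$ with the exponential integrating factor of $F$ and then use connectedness of $(a,b)$. Since $F$ is continuous on $(a,b)$, fix $t_{0}\in(a,b)$ and let $G(t)=\int_{t_{0}}^{t}F(s)\,ds$. Then $G$ is $C^{1}$ on $(a,b)$ with $G'=F$. Put $h(t)=e^{-G(t)}f(t)$. It is differentiable, and
\[
h'(t)=e^{-G(t)}\bigl(f'(t)-F(t)f(t)\bigr)=0\quad\forall t\in(a,b).
\]
Because $(a,b)$ is an interval, the mean value theorem makes $h$ constant, say $h\equiv c$. Hence $f(t)=c\,e^{G(t)}$ for all $t\in(a,b)$.

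Now use the hypothesis that $f$ is not identically zero. If $c=0$ then $f\equiv0$, which is excluded, so $c\neq0$. Since $e^{G(t)}>0$ for every $t$, we get $f(t)\neq0$ for all $t\in(a,b)$, as claimed. In fact $f$ has constant sign, namely the sign of $c$.

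No step here is a real obstacle. The only points needing care are the following.
\begin{itemize}
\item Continuity of $F$ on the open interval guarantees that $G$ exists and is differentiable; local integrability would also suffice, though then we would only get $h$ absolutely continuous.
\item The argument needs $h'=0$ on a connected domain to conclude that $h$ is constant.
\end{itemize}

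If one prefers to avoid integrating factors, there is an alternative route by contradiction. Suppose $Z=f^{-1}(0)$ is nonempty and $f(t_{1})\neq0$ for some $t_{1}$. Take a boundary point $t^{\ast}$ of $Z$ lying between them. On a compact neighbourhood $J$ of $t^{\ast}$ we have $|F|\le K$, so $|f'|\le K|f|$ there. Gr\"onwall's inequality, started from a zero of $f$ in $J$, then forces $f\equiv0$ on all of $J$. This contradicts $t^{\ast}$ being a boundary point of $Z$. The integrating-factor argument is cleaner, and it is the one I would write.

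As applied in Proposition \ref{1.2}, the lemma gives $f(1)=\langle A,A\rangle(x)\neq0$, since $f(0)\neq0$ shows $f$ is not identically zero.
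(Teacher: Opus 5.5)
Your proof is correct and is essentially the paper's argument: both show that $f=Ce^{\varphi}$ with $\varphi'=F$, and both conclude from $f\not\equiv0$ that $C\neq0$. Your integrating-factor computation $h'=e^{-G}(f'-Ff)=0$ is the rigorous version of the paper's separation of variables $\int dy/y=\int F$. That step divides by $y$, so it tacitly assumes the nonvanishing it is meant to prove.
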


\begin{proof}
If the function $f\ $is not identically zero there exists $t_{0}\in(a,b)$ with
$f\left(  t_{0}\right)  \neq0$ and satisfies the differential equation%
\begin{align*}
\frac{dy}{dt}  &  =yF\left(  t\right)  \Rightarrow\int\frac{dy}{y}=\int F\\
&  \Rightarrow y=Ce^{\varphi\left(  t\right)  }\text{ with}\left\{
\begin{array}
[c]{c}%
\text{ }\varphi^{\prime}=F\\
C\in\mathbb{R}%
\end{array}
\right.
\end{align*}
since the function $f\ $is not identically zero it is $C\neq0$, so $f$ is
never zero.
\end{proof}

\begin{proposition}
\label{1.1}If $\mathrm{Ric}=\overline{\mathrm{Ric}}$, then $\overline{\nabla}$
is locally metric.
\end{proposition}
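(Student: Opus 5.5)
By Proposition \ref{0.5.3}, the hypothesis $\mathrm{Ric}=\overline{\mathrm{Ric}}$ means that $A$ is atypical, i.e. it satisfies (\ref{atp}). By Remark \ref{0.3.2}, $\overline{\nabla}$ is locally metric exactly when the $1$-form $\alpha=\langle A,\cdot\rangle$ is closed. So the whole proof reduces to showing $d\alpha=0$ from (\ref{atp}). The Poincar\'{e} lemma then gives, on a neighborhood of each point, a function $\sigma$ with $\alpha=d\sigma$, i.e. $A=\mathrm{grad}\,\sigma$. Theorem \ref{0.3.1} then identifies $\overline{\nabla}$ there with the Levi-Civita connection of $e^{2\sigma}g$.

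To compute $d\alpha$, I use the torsion-free formula
\[
d\alpha(X,Y)=(\nabla_{X}\alpha)(Y)-(\nabla_{Y}\alpha)(X).
\]
Since $\nabla g=0$, we have $(\nabla_{X}\alpha)(Y)=\langle\nabla_{X}A,Y\rangle$. Substituting (\ref{atp}) gives
\[
(\nabla_{X}\alpha)(Y)=\langle A,X\rangle\langle A,Y\rangle-\frac{1}{2}\langle A,A\rangle\langle X,Y\rangle .
\]
This expression is symmetric in $X$ and $Y$. Hence $\nabla\alpha$ is a symmetric tensor, and $d\alpha(X,Y)=0$ for all $X,Y\in\mathfrak{X}(M)$. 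Equivalently, in the notation of \ref{notacion}, $Q=\nabla\alpha-\alpha\otimes\alpha=-\frac{1}{2}\langle A,A\rangle g$ is symmetric. Since $\alpha\otimes\alpha$ is symmetric as well, the skew part of $\nabla\alpha$, which is $\frac12 d\alpha$, vanishes.

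There is no real obstacle here; the only point needing care is the identification of $d\alpha$ with the antisymmetrization of $\nabla\alpha$, which uses that $\nabla$ is the Levi-Civita (torsion-free) connection. The conclusion is only local, since $\alpha$ need not be exact globally unless $M$ is simply connected (or $H^{1}(M)=0$). That is exactly why the statement claims "locally metric" rather than "metric".
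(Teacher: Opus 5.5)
Your proof is correct, and it is more direct than the paper's. The paper splits into cases. When $\langle A,A\rangle\equiv0$, it does essentially your computation: there $\nabla_XA=\langle A,X\rangle A$, and it checks $X(\alpha(Y))-Y(\alpha(X))=\langle A,[X,Y]\rangle$. When $\langle A,A\rangle$ is nonzero somewhere, it uses Proposition \ref{1.2} to get $\langle A,A\rangle\neq0$ everywhere. It then observes that $X\langle A,A\rangle=2\langle\nabla_XA,A\rangle=\alpha(X)\langle A,A\rangle$, so $\alpha=d\log|\langle A,A\rangle|$. You notice that the extra term $-\frac12\langle A,A\rangle\langle X,Y\rangle$ in $(\nabla_X\alpha)(Y)$ is symmetric anyway. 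So the case split and Proposition \ref{1.2} are unnecessary for proving $d\alpha=0$. What the paper's route buys is a stronger conclusion in the non-null case: an explicit global potential $\sigma=\log|\langle A,A\rangle|$. Hence $\overline{\nabla}$ is globally the Levi-Civita connection of $e^{2\sigma}g$, whatever the topology of $M$; this is the corollary stated right after the proposition. Accordingly, your closing remark that global exactness may need $H^{1}(M)=0$ applies, as far as either argument shows, only in the null case $\langle A,A\rangle\equiv0$.
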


\begin{proof}
If $\langle A,A\rangle$ is identically zero, then since $A$ is atypical:
\[
\nabla_{X}A=\langle A,X\rangle A-\frac{1}{2}\langle A,A\rangle X=\langle
A,X\rangle A,\quad\forall X\in\mathfrak{X}(M)
\]
Thus for all $X,Y\in\mathfrak{X}(M)$ if $\alpha(X)=\langle A,X\rangle$ it is
verified:
\begin{align*}
X(\alpha(Y)) &  =X(\langle A,Y\rangle)=\langle\nabla_{X}A,Y\rangle+\langle
A,\nabla_{X}Y\rangle\\
&  =\langle A,X\rangle\langle A,Y\rangle+\langle A,\nabla_{X}Y\rangle
\end{align*}
Therefore $d\alpha=0$, (and by 0.3.2 $\overline{\nabla}$ is locally metric)
since:
\[
X(\alpha(Y))-Y(\alpha(X))=\langle A,\nabla_{X}Y-\nabla_{Y}X\rangle=\langle
A,[X,Y]\rangle
\]
If $\left.  \langle A,A\rangle\right\vert _{p}\neq0$ for some $p$ then by
proposition \ref{1.2} $\langle A,A\rangle$ is never zero. Suppose now for
example $\langle A,A\rangle>0$ on $M$. We have:
\begin{align*}
X(\langle A,A\rangle) &  =2\langle\nabla_{X}A,A\rangle=2(\langle
A,X\rangle\langle A,A\rangle-\frac{1}{2}\langle A,A\rangle\langle
X,A\rangle)\\
&  =\alpha(X)\langle A,A\rangle\quad\forall X\in\mathfrak{X}(M)
\end{align*}
Thus $X(\log(\langle A,A\rangle))=\alpha(X)$, so $\alpha=d(\log(\langle
A,A\rangle))$. Analogously if $\langle A,A\rangle<0$ on $M$ it is
$\alpha=d(\log(-\langle A,A\rangle))$, and the theorem is true on the
non-empty open submanifold $M^{\prime}=\{x\in M:\langle A,A\rangle(x)\neq0\}$.
\end{proof}

We have therefore proved the following corollary

\begin{corollary}
If $A$ is atypical and for some $p\in M$ it is $\langle A,A\rangle(p)\neq0$,
then it is $\langle A,A\rangle(x)\neq0$ for all $x\in M$. and if $\sigma
=\log(|\langle A,A\rangle|)$ it is concluded that $\overline{\nabla}$ is the
Levi-Civita connection for the metric $\overline{g}=e^{2\sigma}g\in
\mathcal{C}$. In particular $A=\mathrm{grad}\sigma$.
\end{corollary}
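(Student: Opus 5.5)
The plan is to combine Proposition \ref{1.2} with the computation already carried out in the proof of Proposition \ref{1.1}, and then to read off the conclusion from Theorem \ref{0.3.1}. Since the corollary is essentially a repackaging of those two results, the proof should be short.

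First, Proposition \ref{1.2} applies directly to the atypical field $A$ with $\langle A,A\rangle(p)\neq0$. It gives $\langle A,A\rangle(x)\neq0$ for all $x\in M$. Since $M$ is connected and $\langle A,A\rangle$ is continuous, $\langle A,A\rangle$ then has constant sign on all of $M$. Hence $\sigma=\log(|\langle A,A\rangle|)$ is a globally defined differentiable function on $M$.

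Next, I would differentiate $\langle A,A\rangle$ along an arbitrary $X\in\mathfrak{X}(M)$ and use the atypical equation (\ref{atp}):
\[
X(\langle A,A\rangle)=2\langle\nabla_{X}A,A\rangle=2\langle A,X\rangle\langle A,A\rangle-\langle A,A\rangle\langle X,A\rangle=\alpha(X)\langle A,A\rangle .
\]
Dividing by the nowhere-vanishing function $\langle A,A\rangle$ gives $X(\sigma)=\alpha(X)$ for every $X$, in either sign case. Thus $\alpha=d\sigma$ globally, and not merely on a neighbourhood of each point as in Remark \ref{0.3.2}. Equivalently, $A=\mathrm{grad}\,\sigma$.

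Finally, $\overline{\nabla}$ is by definition (Notation \ref{notacion}) the conformal connection built from $A$ via (\ref{concon}). By the second part of Theorem \ref{0.3.1}, the identity $A=\mathrm{grad}_{g}\sigma$ is exactly the condition for $\overline{\nabla}$ to be the Levi-Civita connection of $\overline{g}=e^{2\sigma}g$, and $\overline{g}$ lies in $\mathcal{C}$ by construction.

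The only point needing care is the globality of $\sigma$, which is what distinguishes this corollary from the local statement of Proposition \ref{1.1}. It rests entirely on the non-vanishing result of Proposition \ref{1.2} together with the connectedness of $M$. There is no genuine obstacle beyond citing these correctly.
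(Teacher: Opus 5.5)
Your proposal is correct and follows the paper's own route. Proposition \ref{1.2} gives that $\langle A,A\rangle$ vanishes nowhere on $M$. The identity $X(\langle A,A\rangle)=\alpha(X)\langle A,A\rangle$ from the proof of Proposition \ref{1.1} then gives $\alpha=d\sigma$ on all of $M$, and Theorem \ref{0.3.1} identifies $\overline{\nabla}$ as the Levi-Civita connection of $e^{2\sigma}g$. Treating both signs at once via $X(\log|h|)=X(h)/h$ is a slightly cleaner version of the paper's case split.
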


\section{Main results.}

\subsection{Assumptions.}

Recall: $\nabla$ is the Levi-Civita connection of the metric $g$ of $M$,
$\mathcal{C}=[g]$. $\overline{\nabla}$ is a $\mathcal{C}$-conformal symmetric
connection, with the same Ricci tensor $\overline{\mathrm{Ric}}=\mathrm{Ric}$
as $g$, and $A\in\mathfrak{X}(M)$ is the atypical field of \ref{0.3.1} that
relates $\nabla$ and $\overline{\nabla}$.

Since $M$ is connected, from proposition \ref{1.2} it follows that the
following possibilities is verified:

$\langle A,A\rangle(x)>0$ for all $x\in M$. $\langle A,A\rangle(x)<0$ for all
$x\in M$.

$\langle A,A\rangle(x)=0$ for all $x\in M$.

\begin{proposition}
\label{1.4}Suppose $\overline{\mathrm{Ric}}=\mathrm{Ric}$. Then $A$ defines a
field of $\nabla$-pregeodesics. Furthermore, if $\langle A,A\rangle\neq0$ on
$M$, all integral curves of $A$ define incomplete $\nabla$-geodesics.
Consequently: If $A$ is spacelike (respectively timelike) then there exist
spacelike (respectively, timelike) $\nabla$-geodesics that are incomplete.
\end{proposition}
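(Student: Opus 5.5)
By Proposition \ref{0.5.3}, the hypothesis $\overline{\mathrm{Ric}}=\mathrm{Ric}$ means that $A$ is atypical, so (\ref{atp}) holds. Taking $X=A$ there gives
\[
\nabla_{A}A=\langle A,A\rangle A-\tfrac{1}{2}\langle A,A\rangle A=\tfrac{1}{2}\langle A,A\rangle A .
\]
Hence $\nabla_A A$ is pointwise proportional to $A$. Every integral curve of $A$ (where $A\neq0$) is therefore a $\nabla$-pregeodesic, i.e. it can be reparametrized as a geodesic. Where $A=0$ the integral curves are constant and the statement is trivial. This proves the first claim.

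Now assume $\langle A,A\rangle\neq0$ on $M$, and let $\gamma(s)$ be a maximal integral curve of $A$. Write $f(s)=\langle A,A\rangle(\gamma(s))$. The computation in the proof of Proposition \ref{1.1} gives $X\langle A,A\rangle=\alpha(X)\langle A,A\rangle$. With $X=A$ this becomes $f'=f^{2}$, so
\[
f(s)=\frac{-1}{s-c}
\]
for some constant $c$, with $f\neq0$ throughout. Since $f$ cannot blow up at an interior point, the domain of $\gamma$ lies on one side of $c$.

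Next I construct the affine parameter $t$, for which $\beta(t)=\gamma(s(t))$ satisfies $\nabla_{\beta'}\beta'=0$. If $\gamma'=A$ and $\nabla_{\gamma'}\gamma'=h\gamma'$ with $h=f/2$, the affine parameter satisfies $t''(s)=h\,t'(s)$. Then $t'=C e^{\int f/2}=C|s-c|^{-1/2}$, and hence $t=\pm 2C|s-c|^{1/2}+\mathrm{const}$. A cleaner check is that $\langle\beta',\beta'\rangle=t'(s)^{-2}f$ must be constant, which forces $t'\propto|f|^{1/2}$. That is the same formula.

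The key step is to compare the $s$-interval with the $t$-interval. The curve $\gamma$ lives on an $s$-interval contained in $(c,\infty)$ or in $(-\infty,c)$. The map $s\mapsto 2|s-c|^{1/2}$ takes such an interval into a half-line $[0,\infty)$, bounded on the side corresponding to $s=c$. So the affine parameter is bounded on one side and the geodesic $\beta$ is incomplete in that direction. This holds even if the $s$-interval extends all the way to $c$ or to $\pm\infty$. The one point I expect to need care with is that the geodesic is not defined beyond this range. Since $\gamma$ is a maximal integral curve and $\beta$ is just a reparametrization, any extension of $\beta$ as a geodesic past the finite endpoint would, by the pregeodesic property, still be an integral curve of $A$ up to reparametrization. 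That would contradict maximality, because $A$ is a smooth nonvanishing field and its integral curves through nearby points coincide with geodesics tangent to $A$ by uniqueness.

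Finally, the causal character of $\beta'$ equals that of $A$, since $\langle\beta',\beta'\rangle$ has the sign of $f$. So if $A$ is spacelike (respectively timelike), the geodesic $\beta$ is a spacelike (respectively timelike) incomplete $\nabla$-geodesic.
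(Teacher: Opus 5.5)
Your proof is correct and rests on the same idea as the paper's: $\nabla_A A=\frac{1}{2}\langle A,A\rangle A$ gives pregeodesics, and a Riccati equation along the integral curves rules out completeness. The difference is only in parametrization: the paper writes $A=fU$ with $\langle U,U\rangle=\epsilon$, so the integral curves of $U$ are already affinely parametrized geodesics and $U(f)=\frac{\epsilon}{2}f^{2}$ blows up in finite affine time directly, whereas you keep the flow parameter of $A$, solve $f'=f^{2}$ for $\langle A,A\rangle$ and convert via $t\propto|s-c|^{1/2}$, which costs one extra computation but makes explicit the maximality (non-extendability) point that the paper leaves implicit.
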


\begin{proof}
There exists a field $U\in\mathfrak{X}(M)$ and a differentiable
$f:M\longrightarrow\mathbb{R}^{+}$, such that $\langle U,U\rangle=\epsilon$
and $A=fU$, where $\epsilon=1$, if $A$ is spacelike, and $\epsilon=-1$, if $A$
is timelike. On the other hand, since $\nabla_{A}A=\frac{1}{2}\langle
A,A\rangle A$ it is concluded that $A$ is a field of $\nabla$-pregeodesics,
therefore the integral curves of $U$ are $\nabla$-geodesics. But
\[
\frac{\epsilon}{2}f^{3}U=\frac{1}{2}\langle A,A\rangle A=\nabla_{A}%
A=\nabla_{fU}(fU)=f\{U(f)U+f\nabla_{U}U\}=fU(f)U
\]
dividing both members by $f^{3}$ we obtain: $\frac{U(f)}{f^{2}}=\frac
{\epsilon}{2}$. Under these conditions, all ($\nabla$-geodesics) integral
curves of $U$ are incomplete. Indeed, suppose there exists an integral curve
$\gamma:\mathbb{R}\longrightarrow M$ of $U$. The function $\varphi
=f\circ\gamma$ then satisfies the differential equation: $\frac{\varphi
^{\prime}}{\varphi^{2}}=\frac{\epsilon}{2}$ with solution:%
\[
f(\gamma(t))=\frac{2\epsilon f(\gamma(0))}{2-\epsilon f(\gamma(0))t}%
\]
in particular it is concluded that the function $f$ would not be
differentiably defined in a neighborhood of $\gamma(\frac{2}{\epsilon
f(\gamma(0))})$.
\end{proof}

\begin{remark}
[Theorem]\label{Rade} In the Riemannian case all geodesics are spacelike, and
proposition \ref{1.4} guarantees that if $\left(  M,g\right)  $ is complete
then it does not admit atypical fields, and therefore the Ricci tensor
determines the metric. This result has also been proved in \cite{Kun}, which
generalizes \cite{Xing}.
\end{remark}

\begin{lemma}
\label{1.6}If $\overline{\mathrm{Ric}}=\mathrm{Ric}$, and $\langle
A,A\rangle=0$ on $M$. Then, if $A$ is not identically zero, then $A$ is
nowhere zero and $(M,g)$ is timelike and spacelike incomplete, and furthermore
all timelike $\nabla$-geodesics are incomplete.
\end{lemma}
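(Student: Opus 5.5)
The plan is to exploit the very simple form that the atypical equation (\ref{atp}) takes when $\langle A,A\rangle\equiv 0$, namely
\[
\nabla_X A=\langle A,X\rangle A\quad\forall X\in\mathfrak{X}(M),
\]
and to argue along curves via linear and Riccati ODEs, in the spirit of Proposition \ref{1.2} and Proposition \ref{1.4}.

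\textbf{Step 1: $A$ is nowhere zero.} Fix $p\in M$ with $A(p)\neq0$. Take any $x\in M$ and a curve $\gamma$ joining $p$ to $x$. Let $E_1,\dots,E_m$ be a $\nabla$-parallel frame along $\gamma$ and write $A(\gamma(t))=\sum a_i(t)E_i(t)$. The equation above becomes the linear system
\[
a_i'(t)=F(t)a_i(t),\qquad F(t)=\langle A,\gamma'\rangle(t).
\]
Hence $a_i(t)=a_i(0)e^{\int_0^tF}$. Equivalently, one can apply the Lemma preceding Proposition \ref{1.1} to each $a_i$. So $A(\gamma(t))$ is a nonzero multiple of the parallel transport of $A(p)$, and $A(x)\neq0$. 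Connectedness of $M$ handles the choice of $p$: if $A$ vanished at some point, the same argument started there would force $A\equiv0$.

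\textbf{Step 2: incompleteness of timelike geodesics.} Let $\gamma$ be any timelike $\nabla$-geodesic and set $h(t)=\langle A,\gamma'\rangle$. Since $\nabla_{\gamma'}\gamma'=0$,
\[
h'=\langle\nabla_{\gamma'}A,\gamma'\rangle=\langle A,\gamma'\rangle^2=h^2 .
\]
Because $A$ is a nonzero null vector and $\gamma'$ is timelike, $h(t)\neq0$ for all $t$. This is the key use of Step 1 and of the Lorentzian signature. The solution is
\[
h(t)=\frac{h(0)}{1-h(0)t},
\]
which blows up at $t=1/h(0)$, a finite parameter value. This lies in the future if $h(0)>0$ and in the past if $h(0)<0$. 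But $h$ is continuous wherever $\gamma$ is defined, so $\gamma$ cannot be extended to all of $\mathbb{R}$. Hence every timelike geodesic is incomplete, and in particular $(M,g)$ is timelike incomplete.

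\textbf{Step 3: spacelike incompleteness.} At a point $p$, choose a spacelike vector $v$ with $\langle A(p),v\rangle\neq0$. This is possible because $A(p)\neq0$ and spacelike vectors form an open set spanning $T_pM$. Let $\gamma$ be the geodesic with $\gamma'(0)=v$. The same Riccati equation $h'=h^2$ holds with $h(0)\neq0$. By uniqueness of solutions, $h$ can never vanish (otherwise $h\equiv0$), so the same explicit blow-up occurs and $\gamma$ is incomplete.

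The main obstacle is Step 1: one must ensure the argument in Step 2 applies to \emph{all} timelike geodesics, which needs $h\neq0$ everywhere along them. That is exactly where the non-vanishing of $A$ together with the fact that a nonzero null vector is never orthogonal to a timelike one is essential. The ODE computations themselves are routine.
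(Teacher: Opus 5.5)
Your proof is correct. The incompleteness part matches the paper. The paper also reduces to the Riccati equation $a_0'=a_0^2$ for $a_0=\langle A,\gamma'\rangle$, computed in a parallel orthonormal frame along a non-null geodesic; you get the same equation by differentiating $\langle A,\gamma'\rangle$ directly. In both versions timelike incompleteness then follows because a nonzero null $A$ is never orthogonal to a timelike $\gamma'$.

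Where you differ is the proof that $A$ is nowhere zero. The paper extracts this from the same geodesic computation. If $\langle A,\gamma'\rangle(0)=0$ then $a_0\equiv0$ and $A$ is parallel along $\gamma$. So a zero of $A$ at $p$ propagates along every non-null geodesic from $p$, the zero set is open, and by connectedness it is all of $M$.

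You instead treat $F=\langle A,\gamma'\rangle$ as a known coefficient along an arbitrary curve. This gives $A(\gamma(t))=e^{\int_0^tF}P_t(A(p))$, with $P_t$ parallel transport, and the dichotomy (identically zero or nowhere zero) follows directly from path-connectedness. This is the same device as in Proposition \ref{1.2}.

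Your version is slightly cleaner. It needs no geodesics, and it avoids a point the paper passes over: non-null geodesics from $p$ only sweep out a dense subset of a normal neighbourhood, so openness of the zero set needs an extra continuity step. It also yields the stronger fact that the line spanned by $A$ is parallel along every curve.

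The paper's route is more economical, since one frame computation serves both claims. You also make explicit the choice of a spacelike $v$ with $\langle A(p),v\rangle\neq0$, which the paper leaves implicit when asserting spacelike incompleteness.
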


\begin{proof}
Since $\langle A,A\rangle=0$ it is $\nabla_{A}A=0$, and $A$ defines a field of
null $\nabla$-geodesics. Let $\gamma$ be a maximal $\nabla$-geodesic,
$\langle\gamma^{\prime},\gamma^{\prime}\rangle=\epsilon=\pm1$, and let
$E_{0}=\gamma^{\prime},E_{1},...,E_{n}$ be a parallel orthonormal basis along
$\gamma$ with $\epsilon_{i}=\langle E_{i},E_{i}\rangle=\pm1$. It is then
verified that:
\[
A(\gamma(t))=\sum a_{i}(t)\epsilon_{i}E_{i}(t)\quad\text{with }a_{i}=\langle
A,E_{i}\rangle
\]
Since $A$ is atypical, and $\langle A,A\rangle=0$ it is $\nabla_{X}A=\langle
A,X\rangle A$ and we have:
\[
\frac{\nabla A}{dt}=\sum\epsilon_{i}\frac{da_{i}}{dt}E_{i}=\langle
A,\gamma^{\prime}\rangle A=a_{0}\sum a_{i}\epsilon_{i}E_{i}%
\]
in particular, $\frac{da_{0}}{dt}=a_{0}^{2}$. If $\langle A(\gamma
(0)),\gamma^{\prime}(0)\rangle=\epsilon a_{0}(0)\neq0$, taking $\alpha
=a_{0}(0)\neq0$, it is concluded $a_{0}(t)=\frac{\epsilon\alpha}%
{-\epsilon\alpha t+1}$, $\gamma$ cannot be complete, since $a_{0}(t)$ would
become infinite for $t=\frac{1}{\epsilon\alpha}$. If $\langle A(\gamma
(0)),\gamma^{\prime}(0)\rangle=\epsilon a_{0}(0)=0$, it is concluded that
$\epsilon a_{0}(t)=\langle A,\gamma^{\prime}\rangle(t)=0$, and since $A$ is
atypical and $\langle A,A\rangle=0$, $\frac{\nabla A}{dt}=\langle
A,\gamma^{\prime}\rangle A=0$. Thus $A$ is parallel along $\gamma$. This
proves that if for a point $p\in M$, it is $A(p)=0$, then $A\circ\gamma$ is
identically zero for all non-null geodesics $\gamma$ starting from $p$.
Therefore the open set $N=\{x\in M:A(x)\neq0\}$, is also closed, and since it
is non-empty, it coincides with the complete manifold $M$. In the Lorentz
case, any maximal timelike $\nabla$-geodesic $\gamma$ verifies $\langle
A(\gamma(0)),\gamma^{\prime}(0)\rangle\neq0$, and is therefore incomplete.
\end{proof}

\begin{theorem}
[Main.]\label{1.8}If $\left(  M,g\right)  $ is a viable spacetime (Definition
\ref{viable})and $\overline{\nabla}$ is a symmetric connection with the same
Ricci tensor as $g$ that preserves the light cones by parallel transport, then
$\overline{\nabla}$ is the metric connection of $g$.
\end{theorem}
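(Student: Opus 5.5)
The plan is to prove that the atypical field $A$ of Theorem \ref{0.3.1} vanishes identically; then (\ref{concon}) gives $\overline{\nabla}=\nabla$. By Proposition \ref{0.5.3}, the hypothesis $\overline{\mathrm{Ric}}=\mathrm{Ric}$ makes $A$ atypical. By Proposition \ref{1.2} and the connectedness of $M$, $\langle A,A\rangle$ is everywhere positive, everywhere negative, or identically zero. Viability gives a metric in $\mathcal{C}$ with a timelike geodesic $\gamma(t)$ defined for all $t>0$. I take this metric as the reference metric $g$, which is legitimate since the statement is conformally invariant once $\overline{\nabla}$ is fixed. I also normalize $\langle\gamma',\gamma'\rangle=-1$ with $\gamma'$ future pointing.

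Instead of treating the three cases separately through Proposition \ref{1.4} and Lemma \ref{1.6}, which give incompleteness only of integral curves of $A$ or of geodesics in some direction, I will work directly along $\gamma$. Put $a(t)=\langle A,\gamma'\rangle(t)$. Since $\gamma$ is a geodesic, (\ref{atp}) gives the Riccati-type equation
\[
a'(t)=\langle\nabla_{\gamma'}A,\gamma'\rangle=a(t)^{2}+\tfrac{1}{2}\langle A,A\rangle(\gamma(t)).
\]
In the spacelike and null cases the last term is $\geq0$, so $a'\geq a^{2}$. In the timelike case the reversed Cauchy--Schwarz inequality for the timelike vectors $A$ and $\gamma'$ gives $a^{2}\geq-\langle A,A\rangle$, hence $a'\geq\tfrac12 a^{2}$, and moreover $a\neq0$ everywhere. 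In every case $a$ is nondecreasing and satisfies $a'\geq c\,a^{2}$ with $c>0$. If $a(t_{0})>0$ for some $t_{0}>0$, comparison with $y'=cy^{2}$ forces $a$ to blow up in finite forward time, which is impossible because $\gamma$, and hence $a$, is defined and smooth on $(0,\infty)$. Therefore $a\leq0$ on $(0,\infty)$.

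The main obstacle is excluding the remaining sign, $a\leq0$, where the Riccati inequality allows a bounded solution as $t\to+\infty$. My first attempt is to exploit the symmetry of the situation. By Proposition \ref{1.1} and its Corollary, in the non-null case $\overline{\nabla}$ is the Levi-Civita connection of $\overline{g}=e^{2\sigma}g$. The roles of $g$ and $\overline{g}$ can then be exchanged, and the associated atypical field becomes $-\mathrm{grad}_{\overline{g}}\sigma$, whose pairing with the tangent of $\gamma$ has the opposite sign. The difficulty is that $\gamma$ need not be an $\overline{\nabla}$-geodesic, so I must transfer it. Here I would use that timelike $\nabla$-geodesics and $\overline{\nabla}$ are related by (\ref{concon}), and try to show that a suitable reparametrized curve close to $\gamma$ is a future-complete $\overline{g}$-timelike geodesic.

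Failing that, the fallback is to obtain a sharper integrated estimate along $\gamma$. From $a'\geq c a^{2}$ and $a\leq0$ one gets $a(t)\to0$, and along $\gamma$ the equation $\frac{d}{dt}\langle A,A\rangle=2a\langle A,A\rangle$ holds, as in the proof of Proposition \ref{1.1}. Combining these to show that $\langle A,A\rangle\to0$, and then feeding this back into the ODE, I would try to force $A(\gamma(t))=0$ at some point, contradicting Proposition \ref{1.2} and Lemma \ref{1.6} unless $A\equiv0$. In the null case, $A$ is nowhere zero by Lemma \ref{1.6} and $a\neq0$ because a nonzero null vector is never orthogonal to the timelike $\gamma'$. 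The time-orientation of $A$ is therefore constant, and the same two-step argument applies.
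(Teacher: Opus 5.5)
Your Riccati set-up is correct. Along a unit-speed timelike geodesic, $a=\langle A,\gamma'\rangle$ satisfies $a'=a^{2}+\frac12\langle A,A\rangle\ge c\,a^{2}$ in all three causal cases, so $a$ is nondecreasing and forward completeness forces $a\le 0$. The gap is the remaining case $a\le 0$, which you leave to two speculative ideas. No argument can close it: if you only have a timelike geodesic defined for $t>0$ (the literal reading of Definition \ref{viable}), the statement is false.

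Here is a counterexample, which is the paper's own Example \ref{Ex1}:
\begin{itemize}
\item Take $M=\{x\in\mathbb{R}^{1,m-1}:\langle x,x\rangle<0,\ x_0>0\}$ with $m\ge 3$, $g=\eta$, and $\overline{g}=\rho^{-4}\eta$ where $\rho=\sqrt{-\langle x,x\rangle}$.
\item The inversion $\iota(x)=-x/\langle x,x\rangle$ maps $M$ onto itself and $\overline{g}=\iota^{*}\eta$, so both Ricci tensors vanish.
\item $\sigma=-2\ln\rho$ is not constant, so $\overline{\nabla}\neq\nabla$. The atypical field $A=-2x/\langle x,x\rangle$ vanishes nowhere.
\item Every future-directed timelike line $p+tv$ with $p\in M$ stays in $M$ for all $t>0$.
\item Along $\gamma(t)=(1+t)e_0$, with $e_0=(1,0,\ldots,0)$, one finds $a=-2/(1+t)$ and $\langle A,A\rangle=-4/(1+t)^{2}$. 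This is exactly the behaviour your fallback hopes to rule out: $a<0$, $a\to0$, $\langle A,A\rangle\to0$, and $A\neq 0$.
\item The role swap fails too. $\iota$ reverses time orientation, so every timelike $\overline{g}$-geodesic is future incomplete.
\end{itemize}

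What is missing is two-sided completeness, and this is what the paper actually uses. Its abstract speaks of a \emph{complete} timelike geodesic. Its proof takes $\gamma:\mathbb{R}\to M$ and chooses the direction of traversal so that the solution $y=\langle A,\gamma'\rangle$ of $y'=\frac12 y^{2}+f$ starts positive.

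With that hypothesis your argument finishes immediately. Applied to $t\mapsto\gamma(-t)$, it gives $a\ge 0$ on $(-\infty,0)$, and monotonicity then yields $a\equiv 0$.
\begin{itemize}
\item In the timelike case this contradicts $a\neq 0$.
\item In the spacelike case it gives $0=a'=\frac12\langle A,A\rangle$, contradicting $\langle A,A\rangle>0$.
\item In the null case $A(\gamma(t))$ is a null vector orthogonal to $\gamma'$, hence zero, so $A\equiv 0$ by Lemma \ref{1.6}.
\end{itemize}

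That route would be tidier than the paper's. The paper treats the null case separately, and in the non-null case it needs Proposition \ref{1.4} to ensure $f=\frac12(a_1^{2}+\cdots+a_n^{2})>0$. Your inequality $a'\ge c\,a^{2}$ handles all cases at once and needs no ``never tangent'' step.

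Finally, your claim that the statement is conformally invariant is wrong: both $\overline{\mathrm{Ric}}=\mathrm{Ric}_g$ and the conclusion $\overline{\nabla}=\nabla^{g}$ depend on $g$. The claim is also unnecessary, since $(M,g)$ itself is assumed viable.
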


\begin{proof}
If $\nabla$ is the metric connection of $g,$since $\overline{\mathrm{Ric}%
}=\mathrm{Ric}$ , by proposition \ref{0.5.3}, there exists $A$ atypic
vectorfield verifying (\ref{atp})

If $\langle A,A\rangle=0$, and there exists $p\in M$ with $A(p)\neq0$, by
Lemma \ref{1.6} $A$ does not vanish at any point, all timelike geodesics are
incomplete which contradicts the hypothesis. Suppose then that there exists
$p\in M$ with $\langle A,A\rangle(p)\neq0$. By \ref{1.2} we have $\langle
A,A\rangle\neq0$ in all of $M$. Let $\gamma:\mathbb{R}\longrightarrow M$ be
the timelike geodesic which by hypothesis is complete, and let $(E_{0}%
=\gamma^{\prime},E_{1},...,E_{n})$ be a parallel orthonormal basis along
$\gamma$ where $A=\sum a_{i}E_{i}$ with $a_{i}:\mathbb{R}\rightarrow
\mathbb{R}$ differentiable. Note that by Lemma 1.4 $A$ defines a field of
incomplete pregeodesics, so $A\circ\gamma$ is never tangent to $\gamma$, that
is, the function:
\[
f(t)=\frac{1}{2}(a_{1}(t)^{2}+...+a_{n}(t)^{2})
\]
verifies $f(t)>0$ for all $t\in\mathbb{R}$. Since $\frac{\nabla A}{dt}%
=\sum\frac{da_{i}}{dt}E_{i}=\langle A,\gamma^{\prime}\rangle A-\frac{1}%
{2}\langle A,A\rangle\gamma^{\prime}=-a_{0}[\sum a_{i}E_{i}]-\frac{1}%
{2}(-a_{0}^{2}+a_{1}^{2}+...+a_{n}^{2})E_{0}$, and comparing the coefficients
multiplying $E_{0}$ we get:
\[
\frac{da_{0}}{dt}=-\frac{1}{2}a_{0}^{2}-f(t)
\]

Calling $y(t)=-a_{0}(t)$, we conclude that $y(t)$ is defined on all of
$\mathbb{R}$ and satisfies the differential equation:
\begin{equation}
\frac{dy}{dt}=\frac{1}{2}y^{2}+f(t) \label{EDO}%
\end{equation}
where $f:\mathbb{R}\longrightarrow\mathbb{R}$ is differentiable, with
$f(t)>0$.
\[%
\begin{tabular}
[c]{l}%
We can choose the direction of traversal of\\
$\gamma$, such that $a_{0}\left(  0\right)  =-\left.  \left\langle
A,E_{0}\right\rangle \right\vert _{t=0}>0$%
\end{tabular}
\]

This contradicts the following Lemma.
\end{proof}

\begin{lemma}
Any solution $y=y\left(  t\right)  $ of the differential equation:
\[
\frac{dy}{dt}=\frac{1}{2}y^{2}+f,\quad\text{where }f:\mathbb{R}\rightarrow
\mathbb{R}\text{ is differentiable, with }f(t)>0
\]
with $y\left(  0\right)  >0$, cannot be defined on all of $\mathbb{R}$.
\end{lemma}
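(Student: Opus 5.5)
Since $f>0$, the solution satisfies $y'(t)\geq\frac{1}{2}y(t)^{2}>0$ wherever it is defined. The plan is to compare $y$ with the explicit blow-up solution of $z'=\frac{1}{2}z^{2}$ and show that $y$ must escape to $+\infty$ in finite positive time. This contradicts the assumption that $y$ is defined on all of $\mathbb{R}$, and in fact shows that $y$ cannot even be defined on all of $[0,\infty)$.

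\textbf{Step 1: positivity.} Suppose $y$ is defined on an interval containing $[0,T)$. Because $y'=\frac{1}{2}y^{2}+f>0$, $y$ is strictly increasing. Hence $y(t)\geq y(0)>0$ for all $t\in[0,T)$, and in particular $y$ never vanishes there.

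\textbf{Step 2: integrate the reciprocal.} On $[0,T)$ we may divide by $y^{2}$ and obtain
\[
\frac{d}{dt}\left(-\frac{1}{y(t)}\right)=\frac{y'(t)}{y(t)^{2}}=\frac{1}{2}+\frac{f(t)}{y(t)^{2}}>\frac{1}{2}.
\]
Integrating from $0$ to $t$ gives
\[
\frac{1}{y(0)}-\frac{1}{y(t)}>\frac{t}{2},
\qquad\text{hence}\qquad
\frac{1}{y(t)}<\frac{1}{y(0)}-\frac{t}{2}.
\]

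\textbf{Step 3: contradiction.} The left-hand side $1/y(t)$ is positive by Step 1. The right-hand side is nonpositive as soon as $t\geq t_{\ast}=2/y(0)$. So no solution with $y(0)>0$ can exist on $[0,t_{\ast}]$. Equivalently, by comparison with $z(t)=\frac{2y(0)}{2-y(0)t}$, $y$ blows up no later than $t_{\ast}$. Therefore $y$ is not defined on all of $\mathbb{R}$.

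The argument is elementary. The only point needing care is ensuring that $y$ does not vanish, which is what justifies dividing by $y^{2}$; monotonicity in Step 1 handles this. Note also that the hypothesis $f>0$ is more than is needed: $f\geq0$ would suffice.

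In the application to Theorem \ref{1.8}, the sign convention matters. The paper's normalization is that $y(0)=-a_{0}(0)$ be positive, so one should choose the direction of traversal of the complete geodesic $\gamma$ so that this holds. Since $\gamma$ is complete in both directions, reversing $t$ is harmless. Moreover $a_{0}(0)\neq0$, since otherwise the same argument applies trivially after a small time shift, because $y'>0$.
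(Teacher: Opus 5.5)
Your proof is correct and, like the paper's, it rules out existence on $[0,2/y(0)]$; the two arguments reach this by different mechanisms. Both compare $y$ with the explicit blow-up solution $\varphi(t)=2\beta/(2-\beta t)$, $\beta=y(0)$, of $z'=\frac{1}{2}z^{2}$.

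The paper proves $\varphi\le y$ on $(0,2/\beta)$ by a connectedness argument. It shows that the set $J$ of times up to which the inequality holds is nonempty, open and closed, using a first-order touching principle $(\ast)$ at points where $\varphi=y$. That is exactly where it uses strict positivity of $f$: it needs $\varphi'<y'$ at a contact point.

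You instead use monotonicity to keep $y$ away from zero, and then the substitution $u=-1/y$. This converts the Riccati inequality into the linear inequality $u'\geq\frac{1}{2}$, and a single integration (or the mean value theorem) finishes. Your route is shorter, avoids the open--closed bookkeeping, and gives the bound $y>\varphi$ and the blow-up time directly. As you observe, it also needs only $f\geq 0$. The paper's barrier argument is more general in spirit, since it does not rely on the model equation being integrable by a reciprocal substitution, but for this equation it gains nothing.

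One caveat on your closing remark. Reversing the orientation of $\gamma$ to arrange $y(0)>0$ uses that $\gamma$ is defined on all of $\mathbb{R}$, as assumed in the proof of Theorem~\ref{1.8}. Definition~\ref{viable} only requires $t>0$. For such a geodesic with $y(0)<0$ the lemma gives nothing, since for suitable $f>0$ there are solutions with $y(0)<0$ defined on all of $[0,\infty)$.
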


\begin{proof}
Suppose there exists $\psi:\mathbb{R}^{+}\longrightarrow\mathbb{R}$ a solution
of the differential equation defined on all of $\mathbb{R}$ that satisfies
$\beta=\psi(0)>0$. Let
\[
\varphi(t)=\frac{2\beta}{-\beta t+2}
\]
which is the solution to $\frac{dy}{dt}=\frac{1}{2}y^{2}$ such that
$\varphi(0)=\beta$. Consider the interval $I=\left]  0,b=\frac{2}{\beta
}\right[  $. Note that $\varphi>0$ on $I$ and $\lim_{t\rightarrow b}
\varphi(t)=+\infty$. Let us prove that $\varphi\leq\psi$ on $I$, which
prevents the existence of $\psi(b)$. For this it is sufficient to prove that
the set $J=\{s\in I:\varphi(t)\leq\psi(t)\text{ for }0<t<s\}$ is non-empty, as
well as open and closed in $I$:

Since $\varphi(0)=\psi(0)=\beta$, $\varphi^{\prime}(0)=\frac{1}{2}\beta
^{2}<\frac{1}{2}\beta^{2}+f(0)=\psi^{\prime}(0)$, it is concluded by $(\ast)$
that for a certain $\epsilon>0$, and all $t$ with $0<t<\epsilon$, we have
$\varphi(t)<\psi(t)$ and $(0,\epsilon)\subset J$, and $J\neq\emptyset$. On the
other hand, if $s_{1}\in I-J$ there exists $t_{1}$ with $\tau<t_{1}<s_{1}$ and
$\varphi(t_{1} )>\psi(t_{1})$, and this remains true for all values
$s\in(s_{1}-\delta,s_{1}+\delta)$ with $\delta<s_{1}-t_{1}$, thus $I-J$ is
closed. Finally, let us prove that $J$ is open:

Indeed, if $s\in J$ we have $0<\varphi(s)\leq\psi(s)$. Then:
\[
\varphi^{\prime}(s)=\frac{1}{2}\varphi(s)^{2}\leq\frac{1}{2}\psi(s)^{2}%
<\frac{1}{2}\psi(s)^{2}+f(s)=\psi^{\prime}(s)
\]
by $(\ast)$ there exists $\varepsilon$ such that $\varphi(t)<\psi(t)$ for
$s<t<s+\varepsilon$, thus $(s-\varepsilon,s+\varepsilon)\subset J$. $(\ast)$
Let $f=f\left(  t\right)  ,$ $g=g\left(  t\right)  $ be real differentiable
functions defined around $\tau\in\mathbb{R}$, with $f\left(  \tau\right)
=g\left(  \tau\right)  $. Suppose that $f^{\prime}\left(  \tau\right)
<g^{\prime}\left(  \tau\right)  $. There then exists an $\varepsilon>0$ such
that $f\left(  t\right)  \leq g\left(  t\right)  $ for all $t$ with
$\tau<t<\tau+\varepsilon$.
\end{proof}

Theorem 1.8 admits other equivalent statements, which we include here as corollaries.

\begin{corollary}
A conformal diffeomorphism between spacetimes that preserves the Ricci tensor
is necessarily a homothety, if at least one of them is viable.
\end{corollary}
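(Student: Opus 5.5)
The plan is to reduce the corollary to Theorem \ref{1.8} by pulling one metric back to the other manifold. Let $\Phi:(M_1,g_1)\to(M_2,g_2)$ be a conformal diffeomorphism with $\Phi^{\ast}\mathrm{Ric}_{g_2}=\mathrm{Ric}_{g_1}$, and suppose one of the two spacetimes is viable. Since $\Phi^{-1}$ is also conformal and preserves Ricci tensors, we may relabel so that the viable one is $(M_1,g_1)$, i.e. the domain of $\Phi$.

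Set $g:=g_1$ and $\overline{g}:=\Phi^{\ast}g_2$ on $M=M_1$. Conformality of $\Phi$ means $\overline{g}=e^{2\sigma}g$ for some smooth $\sigma$, so $\overline{g}\in\mathcal{C}=[g]$. Because $\Phi:(M,\overline{g})\to(M_2,g_2)$ is an isometry, its Levi-Civita connection $\overline{\nabla}$ satisfies $\overline{\mathrm{Ric}}=\Phi^{\ast}\mathrm{Ric}_{g_2}=\mathrm{Ric}$. The connection $\overline{\nabla}$ is symmetric. By Theorem \ref{0.3.1} it is conformal, with $A=\mathrm{grad}\,\sigma$, so it preserves the light cones of $\mathcal{C}$ under parallel transport. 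All hypotheses of Theorem \ref{1.8} therefore hold, since $(M,g)$ is viable, and we conclude $\overline{\nabla}=\nabla$.

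It remains to turn $\overline{\nabla}=\nabla$ into a homothety. By (\ref{concon}), $\overline{\nabla}=\nabla$ forces $A=0$. Indeed, if $A_p\neq0$, choose $X=Y$ orthogonal to $A_p$ with $\langle X,X\rangle\neq0$, which is possible since $m>2$; then the difference tensor equals $-\langle X,X\rangle A\neq0$. Hence $\mathrm{grad}\,\sigma=0$. Since $M$ is connected, $\sigma$ is constant, so $\Phi^{\ast}g_2=c\,g_1$ with $c=e^{2\sigma}>0$, i.e. $\Phi$ is a homothety.

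The only delicate point is the bookkeeping in the middle step: checking that pulling back makes $\overline{g}$ lie in the conformal class of the viable metric, and that Ricci preservation becomes exactly the hypothesis $\overline{\mathrm{Ric}}=\mathrm{Ric}$ of Theorem \ref{1.8}. Everything else follows directly from the theorem.
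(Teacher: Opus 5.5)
Your proposal is correct and takes the route the paper intends. The paper states this corollary as a direct reformulation of Theorem \ref{1.8} and gives no separate proof. Your argument supplies the bookkeeping the paper leaves implicit: relabel so that the viable metric is the reference metric, pull back $\overline{g}=\Phi^{\ast}g_2=e^{2\sigma}g_1$, apply the theorem to get $\overline{\nabla}=\nabla$, and read this as $A=\mathrm{grad}\,\sigma=0$, so $\sigma$ is constant.
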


\begin{corollary}
In a viable spacetime, the Ricci tensor determines the metric up to a
multiplicative constant.
\end{corollary}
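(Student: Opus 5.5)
Let $(M,g)$ be a viable spacetime. The plan is to read the statement as follows: if $\overline{g}\in\mathcal{C}=[g]$ is a metric in the same causal (conformal) structure with $\mathrm{Ric}_{\overline{g}}=\mathrm{Ric}_{g}$, then $\overline{g}=cg$ for some constant $c>0$. The whole argument reduces to Theorem \ref{1.8}. The one point that needs care is that viability of the causal space is a property of \emph{some} metric in $\mathcal{C}$. So I will fix as reference metric one for which a complete timelike geodesic exists. If $g$ itself is that metric, there is nothing to arrange. If instead $\overline{g}$ is the viable one, the argument below is symmetric in $g$ and $\overline{g}$, so I simply swap their roles.

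First, write $\overline{g}=e^{2\sigma}g$ with $\sigma$ a smooth function on $M$, and let $\overline{\nabla}$ be the Levi-Civita connection of $\overline{g}$. By Theorem \ref{0.3.1}, $\overline{\nabla}$ is a symmetric conformal connection, with $A=\mathrm{grad}_{g}\sigma$. In particular, parallel transport by $\overline{\nabla}$ preserves the light cones of $\mathcal{C}$. Its Ricci tensor is $\overline{\mathrm{Ric}}=\mathrm{Ric}_{\overline{g}}$, which is symmetric and, by hypothesis, equals $\mathrm{Ric}=\mathrm{Ric}_{g}$. So all hypotheses of Theorem \ref{1.8} hold for $\overline{\nabla}$ relative to the viable metric $g$.

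Theorem \ref{1.8} then gives $\overline{\nabla}=\nabla$. By formula (\ref{concon}) this forces $A=0$: since $m>2$, choose $X=Y$ with $\langle X,X\rangle\neq0$ and $X$ independent of $A$; if $A\neq0$, the right-hand side of (\ref{concon}) is not zero. Hence $\mathrm{grad}_{g}\sigma=0$, that is $d\sigma=0$ (Remark \ref{0.3.2}). Since $M$ is connected, $\sigma$ is constant, and $\overline{g}=e^{2\sigma}g$ is a constant multiple of $g$.

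The main obstacle is the first bookkeeping step: making sure the metric to which Theorem \ref{1.8} is applied is the viable one, which the symmetric roles of $g$ and $\overline{g}$ handle. The only other detail is checking that a vanishing connection difference in (\ref{concon}) really implies $A=0$.
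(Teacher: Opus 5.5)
Your proposal is correct and follows the paper, which presents this corollary simply as a restatement of Theorem \ref{1.8}. Your only addition is to spell out the final step: $\overline{\nabla}=\nabla$ forces $A=\mathrm{grad}_{g}\sigma=0$ via (\ref{concon}), so $\sigma$ is constant and $\overline{g}$ is a constant multiple of $g$.

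The role-swapping bookkeeping is unnecessary, because the statement already takes $g$ itself to be the viable metric. It also could not handle a viable metric of $\mathcal{C}$ distinct from both $g$ and $\overline{g}$, and under that broader reading the corollary is actually false. On the future cone of Minkowski space, $\eta$ and $\rho^{-4}\eta$ from Example \ref{Ex1} share the zero Ricci tensor, while $\rho^{-2}\eta=-d\tau^{2}+g_{\mathbb{H}}$ with $\rho=e^{\tau}$ has complete timelike geodesics.
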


\section{Conclusions.}

It is a strange phenomenon that two distinct conformal non homothetic metrics
$g$ and $\overline{g}=e^{2\sigma}g$, share the same Ricci tensor, as this is
equivalent to the existence of an atypical field $A=\operatorname{grad}%
_{g}\sigma$ verifying the exotic property $\nabla_{X}A=\langle A,X\rangle
A-\frac{1}{2}\langle A,A\rangle X$ (\ref{atp}) which relates elements that
depend on the first-order derivatives of the metric with others that depend
only on the point value of the metric. In general the search for atypical
fields turns out to be a formidable problem.

But at a theoretical level, if the existence of such a field is admitted,
(that is $\mathrm{Ric}_{g}=\mathrm{Ric}_{\overline{g}}$) then we prove that it
$A$ is a pregeodesic field with incomplete integral curves in the Riemannian
case (Proposition \ref{1.4}), and this implies that the space should be
geodesically incomplete. Therefore, a complete Riemannian space does not admit
atypical fields (see note \ref{Rade})

In the Lorentz case, it is proven that an atypical field $A$ maintains the
same causal character at every point. In each of the cases (timelike,
lightlike or spacelike) it is proven that if $A$ were to coexist with a
timelike geodesic $\gamma=\gamma\left(  t\right)  $ defined for all $t$, the
function $\left\langle A,\gamma^{\prime}\right\rangle $ would be a solution to
a certain ordinary differential equation that does not admit a solution for
all $t$ (Lemma \ref{1.6} and Theorem \ref{1.8}). Thus all its timelike
geodesics are incomplete and the spacetime could not be viable. The
\emph{viable} condition has a global character.

The analysis of the existence of local atypical fields is in general an
inaccessible problem. Nevertheless, there are local obstructions to their
existence. For example, it is a matter of simple calculation to verify that if
$A$ is an atypical field then $R\left(  X,Y\right)  A=0$ for all $X,Y$ and
from here it follows that $Ric\left(  X,A\right)  =0$ for all $X$, therefore
the Ricci tensor is degenerate. So if $Ric_{p}$ is non-degenerate for a
certain point $p\in M$, then an atypical field cannot exist around $p.$

\end{document}